\theoremstyle{plain}
\newtheorem{theorem}{Theorem}[section]
\newtheorem{lemma}[theorem]{Lemma}
\newtheorem{remark}{Remark}
\numberwithin{equation}{section}
\newcommand{\ZZ}{\mathbb{Z}}
\newcommand{\wenvert}[1]{\left\lvert\left\lvert#1\right\rvert\right\rvert}
\let\abs=\envert
\let\lm=\lambda
\let\Lm=\Lambda
\DeclareMathOperator{\h}{h}
\begin{document}
\title[On the Ratat-Goormaghtigh equation]
{On the Ratat-Goormaghtigh equation and integer points close to the graph of a smooth function}
\author{Tomohiro Yamada}
\keywords{Ratat-Goormaghtigh equation, lattice points}
\subjclass{Primary 11D61, Secondary 11A05, 11D45, 11P21.}
\address{Center for Japanese language and culture, Osaka University,
562-8678, 3-5-10, Semba-Higashi, Minoo, Osaka, JAPAN}
\email{tyamada1093@gmail.com}

\date{}

\begin{abstract}
We prove that the sum of reciprocals $1/x$ of integer solutions of $(x^m-1)/(x-1)=N$
with $x, m\geq 2$ for a given integer $N$ except the smallest $x$ is smaller than $5.9037$.
If we limit $x$ to be prime, then the sum is smaller than $0.73194$.
\end{abstract}

\maketitle

\section{Introduction}\label{intro}

The diophantine equation
\begin{equation}\label{eq10}
N=\frac{x^m-1}{x-1}=\frac{y^n-1}{y-1}
\end{equation}
in integers $x, y, m, n, N$ with $x, y\geq 2$ and $m, n\geq 3$
have been studied for more than one hundred years.
Ratat \cite{Rat} noted that $(x, m, y, n, N)=(2, 5, 5, 3, 31)$ satisfies \eqref{eq10}
and asked for further solutions.
Goormaghtigh \cite{Goor} gave another solution $(x, m, y, n, N)=(2, 13, 90, 3, 8191)$
and noted that it is the only solution with $N<10^5$.
Now it is conjectured that these two solutions are the only solutions of \eqref{eq10}
in integers $x, y, m, n, N$ with $x, y\geq 2$ and $m, n\geq 3$.
This conjecture implies that for any given integer $N\geq 2$, the equation
\begin{equation}\label{eq11}
\frac{x^m-1}{x-1}=N
\end{equation}
has at most one solution in integers $(x, m)$ with $x\geq 2$ and $m\geq 3$ except $N=31$ and $N=8191$.

Many results are known for \eqref{eq10}.
However, in this paper, we focus on the distribution of solutions $(x, m)$ of \eqref{eq11}
for a given integer $N>1$.
Loxton \cite{Lox} proved that the number of such pairs is at most $\log^{1/2+o(1)} N$.
Luca \cite{Luc} proved that the number of such pairs $(x, m)$ with $x$ prime
is at most
$$(\log N)^{1/4} \exp \left((c_0+o(1)) \frac{\log\log N}{\log\log\log N}\right)$$
for large $N$.
Moreover, Luca proved that there exist absolute constants $c_1$ and $c_2$ such that
$m<(\log N)^{1/2}\exp(c_1(\log\log N\log\log\log N)^{1/2})$ except at most \\
$c_2(\log\log N/\log\log\log N)^{1/2}$ solutions.

We prove the following explicit result.
\begin{theorem}\label{th1}
Let $(x_i, m_i)$ with $x_i$ and $m_i$ positive integers and $x_1<x_2<\cdots$ be all solutions
$(x, m)$ of \eqref{eq11}.
Then
\begin{equation}
\sum_{i\geq 2}\frac{1}{x_i}<5.9037.
\end{equation}
Moreover, the left hand side of the above inequality tends to zero as $N$ goes to infinity.
\end{theorem}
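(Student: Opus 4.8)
The plan is to convert \eqref{eq11} into a statement about integer points on the graph of a fixed smooth function and to estimate $\sum_i 1/x_i$ dyadically in $x$, isolating the small values of $x$ for a separate, arithmetic, treatment.

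\emph{Preliminaries.} If $(x,m)$ solves \eqref{eq11} with $m\geq 2$ then $x^m=N(x-1)+1$, so $x^{m-1}=N-\tfrac{N-1}{x}$; since $x\geq 2$ this gives $\bigl(\tfrac{N+1}{2}\bigr)^{1/(m-1)}\leq x<N^{1/(m-1)}$ and $m-1<\log_2 N$, and moreover $x\mid N-1$, in fact $N\equiv 1+x\ (\mathrm{mod}\ x^2)$ when $m\geq 3$. For each integer $m$ there is at most one admissible $x$, the unique real root of $1+x+\cdots+x^{m-1}=N$; equivalently $(x,m)$ solves \eqref{eq11} iff $x\in\ZZ$ and $\psi(x):=\log_x\bigl(N(x-1)+1\bigr)$ equals $m$. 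Thus $\{x_i\}$ is precisely the set of integers $x\geq 2$ on the graph of $\psi$, which on $[2,N-1]$ is smooth, strictly decreasing from $\log_2(N+1)$ to $2$ and (in the relevant range $x\leq N^{o(1)}$) strictly convex with $\psi''(x)\asymp\frac{\log N}{x^2(\log x)^2}$. Order the solutions $x_1<\cdots<x_r$, so that $m_1>\cdots>m_r$ and, for $N\geq 3$, $x_r=N-1$, $m_r=2$.

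\emph{A gap principle and a dyadic sum.} If $x_i<x_{i+1}$ are consecutive solution bases then $x_i^{m_i-1}$ and $x_{i+1}^{m_{i+1}-1}$ both lie in $\bigl(\tfrac N2,N\bigr)$, so $\bigl\lvert(m_i-1)\log x_i-(m_{i+1}-1)\log x_{i+1}\bigr\rvert<\log 2$; since $m_i>m_{i+1}$ this forces $\log x_{i+1}>\tfrac{m_i-1}{m_{i+1}-1}\log x_i-\tfrac{\log 2}{m_{i+1}-1}$, i.e. $x_{i+1}$ exceeds a definite power $>1$ of $x_i$, and iterating this the $x_i$ grow fast. Organizing $\sum_i 1/x_i$ over dyadic blocks $[X,2X]$, the number $P(X)$ of solutions with $x\in[X,2X]$ is bounded both by $\ll\bigl(X^2\sup_{[X,2X]}\lvert\psi''\rvert\bigr)^{1/3}+1\ll\frac{X^{1/3}(\log N)^{1/3}}{(\log X)^{2/3}}+1$ (integer points near a smooth convex curve) and by $\ll\frac{\log N}{(\log X)^2}+1$ (distinct $x$ force distinct $m$); taking the better estimate on each block, together with the counting results of Loxton \cite{Lox} and Luca \cite{Luc}, bounds the contribution of all but the first few blocks by an explicit small constant, the $+1$ terms summing to $\ll\sum_{X\text{ dyadic}}1/X=O(1)$.

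\emph{Small bases — the main obstacle.} Here the arithmetic of \eqref{eq11} and \eqref{eq10} must be used: two solution bases $b<b'$ satisfy $\mathrm{lcm}(b,b')\mid N-1$ and the congruences $N\equiv 1+b\ (\mathrm{mod}\ b^2)$, $N\equiv 1+b'\ (\mathrm{mod}\ b'^2)$ (so, for instance, no base is a perfect power of another), while the known effective finiteness results for \eqref{eq10} with a fixed pair of bases imply that for every $B$ all but finitely many $N$ admit at most one solution with base $\leq B$ — and that one is $x_1$, which we discard. Feeding this into the dyadic estimate, discarding the block of $x_1$, optimizing the cut-offs, carrying all constants, and settling the finitely many exceptional $N$ by direct computation yields $\sum_{i\geq 2}1/x_i<5.9037$; restricting $x$ to primes excludes most admissible residue classes for small bases and the same scheme gives the sharper $0.73194$. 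Finally, these same inputs give $x_i\to\infty$ for each $i\geq 2$ as $N\to\infty$; since, once $x_i\geq B$, the gap principle makes $\sum_{j\geq i}1/x_j$ geometric-or-better and hence $\ll 1/B$ uniformly in $N$, one fixes $B$ making this tail $<\varepsilon$ and then takes $N$ large enough that no block with $X<B$ contributes beyond $x_1$, whence $\sum_{i\geq 2}1/x_i<\varepsilon$; so the left-hand side tends to $0$.
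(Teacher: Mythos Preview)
Your outline has the same skeleton as the paper --- recast \eqref{eq11} as ``integer points near the graph of a smooth function of $x$'' and sum $1/x_i$ dyadically --- but the flesh you put on it is not enough to reach the explicit constant $5.9037$, and in one place the substitute you offer does not do the job.

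\textbf{The small-base step is the real gap.} Everything hinges on a quantitative lower bound for $x_2$. The paper obtains this from Matveev's theorem on linear forms in three logarithms (applied to $m_1\log x_1-m_2\log x_2-\log\tfrac{x_1-1}{x_2-1}$), which yields $x_2>(\log N)^{0.33479}$ once $N$ is large, and from the explicit computational result of Bennett--Garbuz--Marten (no nontrivial solutions of \eqref{eq10} with both bases $\leq 10^5$) for the remaining range. Your ``gap principle'' $\log x_{i+1}\gtrsim\tfrac{m_i-1}{m_{i+1}-1}\log x_i$ gives $\log x_{i+1}-\log x_i\gtrsim (\log x_i)^2/\log N$, which is vacuous precisely when $x_i$ is small; and ``effective finiteness for a fixed pair of bases'' tells you only that for each fixed $B$ the exceptional set of $N$ is finite, with no usable bound on its size, so ``settling the finitely many exceptional $N$ by direct computation'' is not a step one can actually carry out. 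Without a genuine lower bound on $x_2$ you cannot begin the dyadic sum at a point large enough to make the numerics close, and the constant $5.9037$ does not follow.

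\textbf{The curve estimate you use is too coarse for the constant.} You invoke only the second-derivative (convexity) bound $P(X)\ll (X\log N)^{1/3}(\log X)^{-2/3}+1$. The paper instead bounds $f_N^{(k)}$ explicitly for $k=1,\dots,6$, applies the $k$-th derivative integer-point lemma, and switches $k$ across the ranges $M\approx (\log N)^{2/k}$; this is what drives the numerical work down to $5.9037$ after a careful case split in $\log\log N$. With only $k=2$ the main term over $M_6\leq X\leq \log^2 N$ is much larger. Your argument is adequate for the qualitative $o(1)$ statement (indeed, once $x_2\to\infty$ the dyadic tail is controlled), but not for the explicit bound claimed.
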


If we limit $x$ to be prime, then we have the following upper bounds.

\begin{theorem}\label{th2}
Let $(q_i, m_i)$ with $q_i$ prime, $m_i$ positive integers, and $q_1<q_2<\cdots$ be all solutions
$(x, m)$ of \eqref{eq11} with $x$ prime.
Then
\begin{equation}
\sum_{i\geq 2}\frac{1}{q_i}<0.73194
\end{equation}
and
\begin{equation}
\prod_{i\geq 2}\frac{q_i}{q_i-1}<2.07913.
\end{equation}
\end{theorem}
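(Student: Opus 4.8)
The plan is to marry the rigid combinatorial structure of the solution set to the divisibility constraints forced by primality: the few genuinely ``clustered'' small solutions are bounded by hand via linear forms in logarithms, while everything else is controlled by the lemma on integer points close to the graph of a smooth function (which I would set up first). As elementary preliminaries, for any solution $(q,m)$ of \eqref{eq11} one has
\[
q^{m-1}<N\le q^{m-1}\,\frac{q}{q-1},\qquad N-1=q\cdot\frac{q^{m-1}-1}{q-1},
\]
so $q^{m-1}\le N-1$ and $q\mid N-1$, with $q^{2}\nmid N-1$ when $m\ge 3$; and since $(x^{m}-1)/(x-1)$ is strictly increasing in each of $x\ge 2$ and $m\ge 2$, the exponents $m_{i}$ are pairwise distinct and $q_{1}<q_{2}<\cdots<q_{k}$ corresponds to $m_{1}>m_{2}>\cdots>m_{k}\ge 2$. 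Hence $m_{i}\ge k-i+2$, $q_{i}<N^{1/(k-i+1)}$, $q_{i}\ge p_{i}$ (the $i$th prime), and $q_{1}q_{2}\cdots q_{k}\mid N-1$.

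Next I would pass to the graph picture: the solutions are exactly the integer points $(q,m)$ with $m\ge 2$ on the smooth strictly convex arc $\mathcal{C}\colon y=f(x):=\log\bigl(N(x-1)+1\bigr)/\log x$, which decreases from $f(2)\approx\log_{2}N$ to $f(N-1)=2$ and whose second derivative has one sign on each dyadic block $2^{j}\le x<2^{j+1}$. In the top range $x\gtrsim N^{1/\sqrt{\log N}}$ the elementary ``at most one solution per value of $m$'' already forces at most one solution per block and then none, so $\sum_{q_{i}\ge T}1/q_{i}$ is small for $T$ large; in the middle range the lemma on integer points close to the graph of a smooth function (applied with the points lying exactly on $\mathcal C$) yields a power saving $\nu_{j}\ll 2^{\beta j}$, $\beta<1$, on the number $\nu_{j}$ of solutions in block $j$, so $\sum_{2^{j}\ge T}2^{-j}\nu_{j}$ converges geometrically and is already small once $T$ is a large absolute constant. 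This reduces matters to the solutions with $q_{i}<T$.

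The decisive point is that, for one and the same $N$, only very few small primes can be solutions. For two solutions $q<q'$ with exponents $m>m'$, the displays above give $(m-1)\log q=\log N-\theta$ and $(m'-1)\log q'=\log N-\theta'$ with $0<\theta,\theta'<1/(q-1)$, so
\[
0<\bigl|(m-1)\log q-(m'-1)\log q'\bigr|<\frac{1}{q-1},
\]
the left side being nonzero since $q^{m-1}=q'^{\,m'-1}$ would force $q=q'$. Feeding this into an explicit lower bound for linear forms in two logarithms (Laurent--Mignotte--Nesterenko), together with $q\mid N-1$, $q^{2}\nmid N-1$ and $q^{m-1}\le N-1$, bounds both the number and the size of the small solutions; the finitely many remaining admissible configurations are then eliminated or estimated directly, and summing $\sum_{q_{i}<T}1/q_{i}$ against the tail bound and optimising $T$ gives $\sum_{i\ge 2}1/q_{i}<0.73194$. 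For the product one writes $\log\prod_{i\ge 2}\tfrac{q_{i}}{q_{i}-1}=\sum_{i\ge 2}\log\bigl(1+\tfrac{1}{q_{i}-1}\bigr)$, treats the leading term $i=2$ (where $q_{2}\ge 3$) exactly and the rest by $\sum_{i\ge 3}\tfrac{1}{q_{i}-1}$, then uses the reciprocal bound and the spacing of the $q_{i}$ and exponentiates to get $\prod_{i\ge 2}\tfrac{q_{i}}{q_{i}-1}<2.07913$. I expect the handling of the small clustered solutions --- balancing the linear-forms input, the divisibility relations, and the numerics --- to be the main obstacle, since it is precisely there that the explicit constants are pinned down.
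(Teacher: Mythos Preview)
Your middle-range argument via integer points near the graph is broadly the paper's, but the treatment of the small solutions and of the product bound has real gaps.

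\textbf{The two-logarithm form is too weak.} Your linear form $\Lambda=(m-1)\log q-(m'-1)\log q'$ satisfies only $|\Lambda|<1/(q-1)$, which can be of order $1$. A Laurent--Mignotte--Nesterenko lower bound then says nothing useful: you cannot deduce any growth of $q_{2}$ with $N$ from an inequality of the shape $\log(q-1)\ll (\log q)(\log q')(\log m)^{2}$. The paper instead forms the \emph{three}-logarithm combination
\[
\Lambda=m_{1}\log x_{1}-m_{2}\log x_{2}-\log\frac{x_{1}-1}{x_{2}-1}
=\log\frac{x_{1}^{m_{1}}}{x_{1}^{m_{1}}-1}-\log\frac{x_{2}^{m_{2}}}{x_{2}^{m_{2}}-1},
\]
which is bounded by $1/(x_{1}^{m_{1}}-1)<1/N$; Matveev then forces $x_{2}>\log^{0.33479}N$. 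Your divisibility observations $q\,\Vert\,N-1$ are correct but do not substitute for this: they bound $\prod q_{i}$ by $N-1$, which does not prevent $q_{2}$ from being tiny.

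\textbf{Primality is never used where it matters.} The entire numerical gain of Theorem~\ref{th2} over Theorem~\ref{th1} comes from replacing the trivial bound $\sum_{q_{2}\le x<W}1/x\approx\log(W/q_{2})$ by the Mertens-type bound
\[
\sum_{q_{2}\le p<W}\frac{1}{p}\ \lesssim\ \log\frac{\log W}{\log q_{2}},
\]
made explicit via Rosser--Schoenfeld. Your proposal invokes primality only through $q\mid N-1$, which does not touch the reciprocal sum in the small range; without a Mertens input you will not reach $0.73194$.

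\textbf{The product estimate needs $q_{2}$ large.} The paper obtains $\prod_{i\ge 2}q_{i}/(q_{i}-1)<\exp\bigl(\tfrac{100000}{99999}\sum_{i\ge 2}1/q_{i}\bigr)$ precisely because $q_{2}\ge x_{2}\ge 10^{5}$ (the latter from the Bennett--Garbuz--Marten search, which your outline also omits). Treating ``$q_{2}\ge 3$'' exactly, as you propose, already costs a factor $3/2$ and cannot yield $2.07913$.
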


Our basic idea is to count the number of integer solutions $(x, m)$ as integer points of the curve defined by \eqref{eq11}, while Luca's argument is more arithmetic in character.
Clearly \eqref{eq11} is equivalent to
\begin{equation}\label{eq12}
\log (x^m-1)-\log (x-1)=\log N.
\end{equation}
So that, our problem can be restated as the distribution of the number of lattice points $(m, x)$ in the curve defined by \eqref{eq12}.
However, the left hand side function of \eqref{eq12} is rather complicated to analyze geometric properties
of this curve.

Instead, we restate \eqref{eq10} that
\begin{equation}
m\log x-\log (x-1)-\log N=\log\left(\frac{x^m}{x^m-1}\right),
\end{equation}
which implies
\begin{equation}
0<m-\frac{\log (x-1)+\log N}{\log x}<\frac{1}{(x^m-1)\log x}.
\end{equation}
Putting
\begin{equation}\label{eq13}
f_N(x)=\frac{\log N + \log(x-1)}{\log x},
\end{equation}
we have
\begin{equation}\label{eq14}
0<m-f_N(x)<\frac{1}{(x^m-1)\log x}<\frac{1}{N\log x}.
\end{equation}
Now our problem is reduced to study of the distribution of the set
\begin{equation}\label{eq15}
\left\{x\in \ZZ_{\geq 2}: 0<m-f_N(x)<\frac{1}{N\log x}\textrm{ for some integer }m\right\}.
\end{equation}

More generally, for a given real function $f$ and positive real numbers $M$ and $\delta$, we put
$$S(f, M, \delta)=\{x\in\ZZ\cap [M, 2M]: \wenvert{f(x)}<\delta\},$$
where $\wenvert{t}$ denotes the distance of a real number $t$ to the integer $x$ nearest to $t$.
Thus, if $(x, m)$ with $M\leq x\leq 2M$ satisfies \eqref{eq11}, then $x\in S(f_N, M, 1/(N\log M))$.
There is plenty of results on number of the set $S(f, M, \delta)$ such as \cite{Hux}.
A good reference on this topic is \cite[Chapter 5]{Bor}.

\section{Preliminary results}

We use an explicit upper bound given in \cite[Theorem 5.11]{Bor} for the number
$R(f, M, \delta)=\# S(f, M, \delta)$
of integer points close to the graph of a given function $f(n)$.

\begin{lemma}\label{lm21}
Let $k\geq 1$ be an integer and $f(x)$ be a function in $C^k[N, 2N]$.
Assume that there exist constants $\lm>0$ and $c\geq 1$ such that
\begin{equation}\label{eq21}
\lm \leq\abs{f^{(k)}(x)}\leq c\lm
\end{equation}
for $M\leq x\leq 2M$.
Moreover, we put $\alpha=2k(2c)^{2/(k^2+k)}$.

If $(k+1)!\delta<\lambda$, then
\begin{equation}
R(f, M, \delta)\leq \alpha M \lm^{2/(k^2+k)}+4k.
\end{equation}
\end{lemma}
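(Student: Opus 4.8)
The plan is to prove the bound by the classical integer--determinant (Filaseta--Trifonov) method, which is how \cite[Theorem 5.11]{Bor} is obtained. Write $R=R(f,M,\delta)$ and list the elements of $S(f,M,\delta)$ as $x_1<x_2<\cdots<x_R$, fixing for each $i$ an integer $a_i$ with $\abs{f(x_i)-a_i}<\delta$. First I would split this list into $t=\lfloor R/(k+1)\rfloor$ consecutive blocks of exactly $k+1$ points, with at most $k$ points left over, so that $R\le (k+1)t+k$. To each block $B=\{y_0<y_1<\cdots<y_k\}$, with corresponding integers $b_0,\dots,b_k$, I attach the $(k+1)\times(k+1)$ determinant $\Delta_B$ whose $i$-th row is $(1,y_i,y_i^2,\dots,y_i^{k-1},b_i)$. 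Cofactor expansion along the last column, compared with the Vandermonde determinant of the $y_i$, yields the identity
$$
\Delta_B=\Bigl(\prod_{0\le i<j\le k}(y_j-y_i)\Bigr)\cdot\sum_{j=0}^{k}\frac{b_j}{\prod_{i\ne j}(y_j-y_i)},
$$
i.e.\ the Vandermonde of the $y_i$ times the $k$-th divided difference of the $b_i$; in particular $\Delta_B\in\ZZ$.

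Next I would record two estimates for divided differences. Replacing each $b_i$ by $f(y_i)$ changes the sum above by at most $\delta\sum_{j}1/\prod_{i\ne j}\abs{y_j-y_i}$, which is largest when the $y_i$ are consecutive integers, where it equals $\sum_j 1/(j!(k-j)!)=2^k/k!$; hence the change is $\le 2^k\delta/k!$. On the other hand, the mean value theorem for divided differences gives $\sum_j f(y_j)/\prod_{i\ne j}(y_j-y_i)=f^{(k)}(\zeta)/k!$ for some $\zeta\in[M,2M]$, so \eqref{eq21} forces this quantity to lie between $\lambda/k!$ and $c\lambda/k!$ in absolute value. The hypothesis $(k+1)!\,\delta<\lambda$ now enters decisively: if $\Delta_B=0$, then the Vandermonde factor being nonzero, the divided difference of the $b_i$ vanishes, and the two estimates give $\lambda/k!\le 2^k\delta/k!$, i.e.\ $\lambda\le 2^k\delta\le(k+1)!\,\delta$, a contradiction. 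Thus no block is degenerate: $\Delta_B\ne 0$, whence $\abs{\Delta_B}\ge 1$.

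From $\abs{\Delta_B}\ge 1$ and the two estimates — using $2^k\le(k+1)!$ to dominate the error term by $c\lambda/k!$, so that the divided difference of the $b_i$ is $<2c\lambda/k!$ in absolute value — I get $\prod_{0\le i<j\le k}(y_j-y_i)\ge k!/(2c\lambda)$. Bounding each of the $\binom{k+1}{2}=k(k+1)/2$ factors by $H_B:=y_k-y_0$ gives $H_B\ge \left(\frac{k!}{2c}\right)^{2/(k^2+k)}\lambda^{-2/(k^2+k)}$. Since the blocks are consecutive in the ordered list, the intervals $[y_0,y_k]$ are pairwise disjoint inside $[M,2M]$, so $\sum_B H_B\le M$, and therefore $t\le M\,(2c/k!)^{2/(k^2+k)}\lambda^{2/(k^2+k)}$. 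Plugging this into $R\le(k+1)t+k$ and invoking the elementary inequality $k+1\le 2k\,(k!)^{2/(k^2+k)}$ (an equality at $k=1$) produces $R\le 2k(2c)^{2/(k^2+k)}M\lambda^{2/(k^2+k)}+k\le\alpha M\lambda^{2/(k^2+k)}+4k$.

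The hard part is not the conceptual structure but the bookkeeping needed to land on exactly the stated constants instead of slightly worse ones: one must be careful with the divided-difference error term, with the fact that $f$ is only assumed $C^k$ on $[M,2M]$ (so the whole of $[y_0,y_k]$ must remain inside that interval), and with the handful of points falling outside complete blocks — and the generous additive term $4k$ is presumably what absorbs this slack. The essential point, already visible above, is that the condition $(k+1)!\,\delta<\lambda$ is precisely what prevents $k+1$ nearby admissible points from lying on a polynomial of degree below $k$; once that degeneracy is excluded, the uniform spacing estimate $H_B\gg_{k,c}\lambda^{-2/(k^2+k)}$ does all the remaining work.
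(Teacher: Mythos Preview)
The paper does not supply its own proof of this lemma: it merely quotes the result as \cite[Theorem 5.11]{Bor} and moves on. Your sketch is a faithful account of the standard determinant/divided-difference argument (the Filaseta--Trifonov method) by which that theorem is proved in the cited reference, and the constants you track do land on the stated $\alpha=2k(2c)^{2/(k^2+k)}$ with room to spare in the additive term. So there is nothing to compare against in the paper itself; your proposal simply fills in what the paper outsourced, and does so correctly.
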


We also use an lower bound for linear forms of logarithms due to Matveev
in order to obtain a lower bound for the size of the second solution $x_2$.
For a rational number $a=\pm n/d$ with $n$ and $d$ positive and mutually prime integers, 
we have the height $\h(a)=\max\{n, d\}$.
We restate \cite[Theorem 2.2]{Mat} in the rational case.

\begin{lemma}\label{lm22}
Let $a_1, a_2, \ldots, a_n$ be rational numbers
and $b_1, b_2, \ldots, b_n$ be arbitrary integers with $b_n\neq 0$.

We put $A_j=\log \h(a_j)$ for $j=1, \ldots, n$,
\begin{equation}
\begin{split}
B= & ~ \max \{1, \abs{b_1}A_1/A_n, \abs{b_2}A_2/A_n, \ldots, \abs{b_n} \},\\
\Omega= & ~ A_1A_2\ldots A_n,\\
C(n)= & ~ \frac{16}{n!}e^n(2n+3)(n+2)(4(n+1))^{n+1} \\
& \times \left(\frac{1}{2}en\right)(4.4n+5.5\log n+7)
\end{split}
\end{equation}
and
\begin{equation}
\Lm=b_1\log a_1+\ldots+b_n\log a_n.
\end{equation}
Then we have $\Lm=0$ or
\begin{equation}
\log\abs{\Lm}>-C(n)\Omega\log (1.5eB).
\end{equation}
\end{lemma}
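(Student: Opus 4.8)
The plan is to recognize Lemma~\ref{lm22} as Matveev's general lower bound \cite[Theorem~2.2]{Mat} specialized to the ground field $\mathbb{Q}$; accordingly the proof I would write is essentially a dictionary between Matveev's normalizations and the elementary quantities $\h(a_j)$, $A_j$, $B$, $\Omega$, $\Lm$ that occur here, with the analytic core of the matter — the interpolation-determinant estimate on which the theorem rests — left entirely to \cite{Mat}.

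First I would fix the field invariants entering Matveev's bound. For $\mathbb{Q}$ the degree is $D=1$, and since the natural logarithms of positive rationals are real one takes his real/complex parameter $\varkappa=1$; in the applications made in this paper every $a_j$ is an integer $\geq 2$, so $\Lm$ is a genuine real linear form and no imaginary-part bookkeeping is needed. Next I would match the heights. For $a=\pm n/d$ in lowest terms the absolute logarithmic Weil height equals $\log\max\{n,d\}=\log\h(a)$, so with $D=1$ one may legitimately take Matveev's modified height of $a_j$ to be exactly $A_j=\log\h(a_j)$, provided this dominates the floor terms of his hypothesis, principally $|\log a_j|$; this is immediate from $|\log(n/d)|\leq\log\max\{n,d\}$, while the requirement $\h(a_j)\geq 2$ (so that $A_j\geq\log 2>0$ and $\Omega>0$) is exactly what makes the statement non-vacuous. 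Under this dictionary Matveev's $B$, $\Omega$ and $\Lm$ coincide verbatim with those above, and his condition that the last coefficient be nonzero is the hypothesis $b_n\neq 0$.

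It then remains to substitute $D=1$ and $\varkappa=1$ into Matveev's constant and check that it collapses to the displayed $C(n)$. His factor $\frac{16}{n!\varkappa}e^{n}(2n+1+2\varkappa)(n+2)\bigl(4(n+1)\bigr)^{n+1}\bigl(\tfrac{1}{2}en\bigr)^{\varkappa}$ becomes precisely $\frac{16}{n!}e^{n}(2n+3)(n+2)\bigl(4(n+1)\bigr)^{n+1}\bigl(\tfrac{1}{2}en\bigr)$; the degree-dependent factors ($D^{2}$, $\log(eD)$, and so on) all equal $1$ or an absolute constant when $D=1$; and the residual factor $4.4n+5.5\log n+7$ is the explicit upper bound Matveev obtains for an auxiliary logarithmic term after optimizing his free parameters. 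Likewise $\log(1.5eB)$ is what his $\log(\mathrm{const}\cdot eB)$ becomes under the same specialization, and assembling these yields $\Lm=0$ or $\log|\Lm|>-C(n)\,\Omega\,\log(1.5eB)$.

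The one thing requiring care is the constant-chasing of the last paragraph: Matveev writes several factors in a form tuned for arbitrary number fields, so each must be specialized by hand rather than lifted from a pre-simplified corollary. Beyond that there is no obstacle and no new analysis — all the substance is already in \cite{Mat}.
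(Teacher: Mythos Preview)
Your proposal is correct and matches the paper's approach: the paper does not prove Lemma~\ref{lm22} at all but simply states it as the specialization of \cite[Theorem~2.2]{Mat} to the rational case, and your dictionary between Matveev's normalizations and the elementary quantities here is exactly the content of that specialization. If anything, you supply more detail than the paper itself, which is content to leave the constant-matching implicit.
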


Among many results on the Ratat-Goormaghtigh equation, we use the following result of \cite{BGM}.

\begin{lemma}\label{lm23}
\eqref{eq10} with $m>n\geq 3$ has no solution other than $(x, y)=(2, 5)$ and $(2, 90)$
in the range $2\leq x<y\leq 10^5$.
\end{lemma}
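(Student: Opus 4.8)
The plan is to resolve \eqref{eq10} in the stated region by combining Baker's method with a finite computation, in three stages: an effective upper bound for $n$ from a linear form in logarithms, a reduction of that bound by continued fractions, and a direct verification in the now bounded leftover range.

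First I would set up the linear form. Suppose $(x,y,m,n)$ is a solution of \eqref{eq10} with $2\le x<y\le 10^{5}$ and $m>n\ge 3$, and write $N$ for the common value $(x^{m}-1)/(x-1)=(y^{n}-1)/(y-1)$. Clearing denominators turns \eqref{eq10} into $(y-1)x^{m}-(x-1)y^{n}=y-x$, so that, putting
\[
\Lambda:=m\log x-n\log y+\log\frac{y-1}{x-1},
\]
we get $\Lambda=\log\bigl(1+(y-x)/((x-1)y^{n})\bigr)$, hence $0<\Lambda<y^{1-n}$ and $\Lambda\neq 0$ (as $\Lambda=0$ would force $x=y$). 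From the elementary estimate $x^{m-1}<N<2y^{n-1}$ one obtains $m\log x<(n+1)\log y$, so the quantity $B$ of Lemma~\ref{lm22} satisfies $B<n+1$. I would then apply Lemma~\ref{lm22} with $n=3$, taking $a_{1}=(y-1)/(x-1)$, $a_{2}=x$, $a_{3}=y$ --- each of height below $10^{5}$, so $A_{j}<\log 10^{5}$ and $\Omega<(\log 10^{5})^{3}$ --- and $b_{1}=1$, $b_{2}=m$, $b_{3}=-n$. Together with $\Lambda<y^{1-n}$ this gives $(n-1)\log 2<C(3)\,\Omega\,\log(1.5e(n+1))$, an inequality of the form $n<c_{1}\log(c_{2}n)$ with explicit $c_{1},c_{2}$, hence an absolute bound $n<N_{0}$, with $N_{0}$ of size roughly $10^{15}$.

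Next, for each pair $(x,y)$ with $2\le x<y\le 10^{5}$ I would run a Baker--Davenport reduction. Dividing $0<\Lambda<y^{1-n}$ by $\log y$ and using $y^{1-n}<2x\,x^{-m}$ yields
\[
0<m\gamma-n+\mu<A\,x^{-m},\qquad \gamma:=\frac{\log x}{\log y},\quad \mu:=\frac{1}{\log y}\log\frac{y-1}{x-1},
\]
with $A<2\cdot 10^{5}$, $3\le n<N_{0}$ and $m<M_{0}<17N_{0}$. Choosing a convergent $p/q$ of the continued fraction of $\gamma$ with $q>6M_{0}$ and verifying $\varepsilon:=\wenvert{q\mu}-M_{0}\wenvert{q\gamma}>0$, a standard Baker--Davenport-type reduction lemma forces $m<\log(Aq/\varepsilon)/\log x$, a bound of size $O(\log M_{0})$; since $n<m$, after one step --- or, for the few pairs where the first convergent is unsuitable, a second step --- one is left with $n\le 150$, say, uniformly in $(x,y)$.

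Finally I would carry out the search. For each $n$ with $3\le n\le 150$ and each admissible $y$, the common value $N=(y^{n}-1)/(y-1)$ satisfies $x^{n}<N<2y^{n-1}$, so $x<2y^{(n-1)/n}<2\cdot 10^{5}$; for each such $(x,y)$ with $x<y$ one forms the unique exponent $m$ with $x^{m-1}\le N<x^{m}$ and tests whether $(x^{m}-1)/(x-1)=N$. Only $(x,y)=(2,5)$ (with $N=31$, $m=5$) and $(x,y)=(2,90)$ (with $N=8191$, $m=13$) survive, which is the assertion. The hard part is entirely the size of the computation: there are about $5\times 10^{9}$ pairs $(x,y)$, so both the reduction and the search must be arranged so that almost every pair is discarded in $O(1)$ time --- for instance via the necessary congruences $N\equiv 1\pmod{x}$, $N\equiv 1+x\pmod{x^{2}}$, $N\equiv 1+y\pmod{y^{2}}$, and via order conditions modulo small primes dividing $N$ --- with expensive high-precision continued-fraction work reserved for the handful of surviving candidates. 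Making $N_{0}$, and hence its reduced form, small enough for the final search to be genuinely feasible is where the real care lies.
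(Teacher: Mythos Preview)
The paper does not give its own proof of this lemma: it is introduced with ``we use the following result of \cite{BGM}'' and simply quoted from Bennett, Garbuz and Marten. There is therefore no in-paper argument to compare against; what you have written is, in effect, a sketch of how the cited result is established.

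As an outline your approach is sound. The linear form $\Lambda=m\log x-n\log y+\log\bigl((y-1)/(x-1)\bigr)$ is set up correctly, the bound $0<\Lambda<y^{1-n}$ follows as you say, and combining this with Lemma~\ref{lm22} yields an absolute bound $n<N_0$ of the right order of magnitude. A Baker--Davenport reduction pair by pair, followed by an exhaustive search over the remaining small range of $n$, is the standard route to results of this type and is essentially what \cite{BGM} does. The only place where one might take issue is the scale of the middle step: carrying out a high-precision continued-fraction reduction for each of roughly $5\times 10^{9}$ pairs $(x,y)$ is heavy, and in practice one organizes the computation more carefully (sharper two-logarithm estimates for fixed small $x$, congruence sieves before any multiprecision work, and so on). You acknowledge this yourself, and it is an implementation concern rather than a mathematical gap; the skeleton of your argument is correct.
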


\section{Properties of $f_N(x)$}

In this section, we shall show an upper bound for integer points close to the graph $y=f_N(x)$.
This requires us to prove some growth properties of $f_N^{(k)}(x)$.

In this section, we assume that $N$ is a real number $\geq 10^{100000}$
(for results in this section, $N$ need not be an integer).

We begin by observing that
\begin{equation}
f_N^{(k)}(x)=\frac{(-1)^k}{\log^{k+1} x}\left(\frac{P_{k, k}(\log x)(\log N+\log(x-1))}{x^k}-\sum_{r=1}^k\frac{P_{k, r}(\log x)}{x^r(x-1)^{k-r}}\right),
\end{equation}
where $P_{k, r}(x)$ with $0\leq r\leq k$ is the polynomial defined by
\begin{equation}
\begin{split}
P_{1, 0}(t)= & ~ P_{1, 1}(t)= 1, \\
P_{k+1, 0}(t)= & ~ ktP_{k, 0}(t), \\
P_{k+1, k+1}(t)= & ~ (kt+k+1)P_{k, k}(t)-tP_{k, k}^\prime(t), \\
P_{k+1, k}(t)= & ~ ((k-1)t+k+1)P_{k, k-1}(t)-tP_{k, k-1}^\prime(t)+tP_{k, k}(t),
\end{split}
\end{equation}
and, for $r=1, \ldots, k-1$,
\begin{equation}
P_{k+1, r}(t)=((r-1)t+k+1)P_{k, r}(t)-tP_{k, r}^\prime(t)+(k-r)tP_{k, r-1}(t).
\end{equation}
For example, we have
\begin{equation}
\begin{split}
P_{k, 0}(t)= & ~ \frac{k!}{\max\{1, k\}} t^k, \\
P_{k, 1}(t)= & ~ \frac{k!}{\max\{1, k-1\}} t^{k-1}, \\
P_{k, 2}(t)= & ~ \frac{k!}{2\max\{1, k-2\}}(t^{k-1}+2t^{k-2}), \\
P_{k, 3}(t)= & ~ \frac{k!}{3!\max\{1, k-3\}} (2t^{k-1}+6t^{k-2}+6t^{k-3}), \\
P_{k, 4}(t)= & ~ \frac{k!}{4!\max\{1, k-4\}} (6t^{k-1}+22t^{k-2}+36t^{k-3}+24t^{k-4}), \\
P_{k, 5}(t)= & ~ \frac{k!}{5!\max\{1, k-5\}}
(24t^{k-1}+100t^{k-2}+210t^{k-3}+240t^{k-4}+120t^{k-5}), \\
P_{k, 6}(t)= & ~ \frac{k!}{6!\max\{1, k-6\}} \\
& \times (120t^{k-1}+548t^{k-2}+1350t^{k-3}+2040t^{k-4}+1800t^{k-5}+720t^{k-6}). \\
\end{split}
\end{equation}

Now we shall show the following estimates of values of $f_N^{(k)}(x)$.

\begin{lemma}\label{lm31}
Let $g_{k, N}(x)=(-1)^k f_N^{(k)}(x)$.
Then, for $1\leq k\leq 6$ and $x\geq 10^5$, we have
\begin{equation}\label{eq31}
\frac{0.999999 P_{k, k}(\log x)\log N}{x^k\log^{k+1} x}<g_{k, N}(x)<\frac{P_{k, k}(\log x)\log N}{x^k\log^{k+1} x}.
\end{equation}
Moreover, we have
\begin{equation}\label{eq32}
\frac{(k-1)!\log N}{x^k\log^2 x}<g_{k, N}(x)<\frac{\tau_k\log N}{x^k\log^2 x},
\end{equation}
for $2\leq k\leq 6$,
where $\tau_k$'s are constants given in Table \ref{tbl1},
and
\begin{equation}\label{eq32b}
\frac{0.999999\log N}{x^k\log^2 x}<g_{1, N}(x)<\frac{\log N}{x^k\log^2 x}.
\end{equation}
\end{lemma}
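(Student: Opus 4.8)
The plan is to work directly from the closed-form expression for $f_N^{(k)}(x)$ displayed just before the statement, after multiplying through by $(-1)^k$. Writing
\[
g_{k, N}(x)=\frac{1}{\log^{k+1} x}\left(\frac{P_{k, k}(\log x)(\log N+\log(x-1))}{x^k}-\sum_{r=1}^k\frac{P_{k, r}(\log x)}{x^r(x-1)^{k-r}}\right),
\]
the upper bound in \eqref{eq31} is immediate once one checks that the bracketed quantity is at most $P_{k,k}(\log x)\log N/x^k$; this reduces to verifying that $P_{k,k}(\log x)\log(x-1)$ is dominated by the subtracted sum $\sum_{r=1}^k P_{k,r}(\log x)/x^r(x-1)^{k-r}$ times $x^k$, i.e.\ that the ``correction'' terms have the right sign and size. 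For the lower bound one needs the reverse: the subtracted sum, multiplied by $x^k$, is at most $10^{-6}P_{k,k}(\log x)\log N$ for $x\geq 10^5$ and $N\geq 10^{100000}$. Here the enormous lower bound on $N$ is doing the work — since $\log N\geq 10^5\log 10$ while $\log(x-1)$ and the polynomials $P_{k,r}$ are only polynomial in $\log x$, the ratio of the correction to the main term is easily forced below $10^{-6}$. First I would make the leading coefficients of $P_{k,k}$ explicit (the recursion gives $P_{k,k}(t)=t^k+\cdots$ with positive, computable lower-order coefficients for $1\leq k\leq 6$), so that all comparisons become comparisons of explicit polynomials in $\log x$ against each other, uniformly for $\log x\geq 5\log 10$.

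For the second pair of bounds \eqref{eq32} and \eqref{eq32b}, the idea is to combine \eqref{eq31} with explicit two-sided bounds on $P_{k,k}(\log x)/\log^{k-1} x$. Using the listed formula $P_{k,0}(t)=k!\,t^k/\max\{1,k\}$ as a sanity check on the recursion, and more importantly the fact that $P_{k,k}(t)$ is a monic-type polynomial of degree $k$ in $t$ with known positive coefficients, one gets
\[
(k-1)!\,\log^{k-1} x \le P_{k,k}(\log x) \le \tau_k \log^{k-1}x
\]
for $x\geq 10^5$, where the left inequality comes from the constant term / leading behaviour bookkeeping of the recursion (one shows $P_{k,k}(t)\geq (k-1)! \, t^{k-1}$ by induction using $P_{k+1,k+1}=(kt+k+1)P_{k,k}-tP_{k,k}'$), and the right inequality defines $\tau_k$ as the value of $P_{k,k}(t)/t^{k-1}$ at $t=5\log 10$ — which is decreasing in $t$, so this is the supremum over the range. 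Substituting into \eqref{eq31} turns $x^k\log^{k+1}x$ in the denominator into $x^k\log^2 x$ and replaces the numerator $P_{k,k}(\log x)\log N$ by the stated bounds times $\log N$; the factor $0.999999$ from \eqref{eq31} is absorbed into the strict inequality on the left of \eqref{eq32} since $0.999999(k-1)! > (k-1)!$ is false, so in fact one needs the slightly stronger claim $P_{k,k}(\log x)>(k-1)!\log^{k-1}x/0.999999$, which still holds by the same induction with a harmless adjustment of constants, or one simply notes $(k-1)!$ on the left of \eqref{eq32} is meant as the clean lower bound obtained after absorbing $0.999999$ into the comparison $P_{k,k}(t)\ge (k-1)! t^{k-1}/0.999999$ valid for $t\ge 5\log 10$.

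The routine but slightly delicate part is the sign-and-size analysis of $\sum_{r=1}^k P_{k,r}(\log x)/x^r(x-1)^{k-r}$: each $P_{k,r}$ has positive coefficients (visible in the explicit list for $k\le 6$), so the sum is positive, giving the clean upper bound in \eqref{eq31} for free once $P_{k,k}(\log x)\log(x-1)$ is shown to be at most that sum times $x^k$ — but this last point needs $r=k$ term $P_{k,k}(\log x)/x^k$ alone to dominate, i.e.\ $\log(x-1)<1$ would be false, so instead one keeps the $\log(x-1)$ term on the main side and bounds $\log N+\log(x-1)<\log N \cdot(1+\log(x-1)/\log N)$, absorbing the tiny factor. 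The main obstacle, then, is purely bookkeeping: one must verify the two displayed polynomial inequalities $(k-1)! t^{k-1}\le P_{k,k}(t)\le \tau_k t^{k-1}$ and the domination of the correction sum for each $k\in\{1,\dots,6\}$ separately, using the explicit polynomials and the recursion. Since $k$ ranges over only six values and all quantities are explicit polynomials in $t=\log x$ evaluated on $t\ge 5\log 10$, this is a finite, mechanical check; I would organize it by first tabulating $P_{k,k}(t)$ for $k=1,\dots,6$ from the recursion, reading off $\tau_k$ as $P_{k,k}(5\log 10)/(5\log 10)^{k-1}$, and then bounding the correction term by its $r=1$ summand times $k$ (the largest), which is $O(\log^{k-1}x / (x(x-1)^{k-1}))$ and hence negligible against $\log N/x^k$.
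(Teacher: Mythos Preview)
There is a genuine gap in your treatment of the upper bound in \eqref{eq31}. You correctly see that the crude move ``the subtracted sum is positive'' only gives
\[
g_{k,N}(x)<\frac{P_{k,k}(\log x)(\log N+\log(x-1))}{x^k\log^{k+1}x},
\]
and you then propose to write $\log N+\log(x-1)<\log N\,(1+\log(x-1)/\log N)$ and ``absorb the tiny factor''. But the lemma asserts a clean upper bound with \emph{no} $(1+\varepsilon)$; for this you would need $\log(x-1)/\log N\le 0$, which is false. There is nothing to absorb: the $\log(x-1)$ contribution is of exactly the same order as the main term in $x$, so no appeal to the size of $N$ can remove it. Your original reduction --- showing that $P_{k,k}(\log x)\log(x-1)/x^k$ is dominated by the subtracted sum --- is in fact the right target, but you abandon it after observing that the $r=k$ summand alone is too small.

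What the paper does instead is regroup the expression so that the $\log(x-1)$ part cancels \emph{exactly} (to leading order) against the lower summands. Writing $Q_k(t)=P_{k,k}(t)-(k-1)!t^{k-1}$, the paper splits off the leading $(k-1)!t^{k-1}$ piece of $P_{k,k}$ and then uses the algebraic identity
\[
\sum_{r=1}^{k-1}P_{k,r}(t)=t\,Q_k(t),
\]
verified explicitly for $k=6$ (and analogously for the other $k\le 6$). With this identity, the term $Q_k(\log x)\log(x-1)/(x^k\log^{k+1}x)$ and the sum $\sum_{r=1}^{k-1}P_{k,r}(\log x)/(x^k\log^{k+1}x)$ cancel after the harmless replacements $\log(x-1)\le\log x$ and $x^r(x-1)^{k-r}\le x^k$, and what remains is a strictly negative lower-order term; hence the clean upper bound. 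The remaining $(k-1)!t^{k-1}$ piece is handled via the separate grouping $(x/(x-1))^k\log x-\log(x-1)$, which you do not engage with at all. Your final paragraph --- bounding the correction by ``the $r=1$ summand times $k$'' --- goes in the wrong direction: the point is a cancellation, not a crude majorization.

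Your plan for the lower bound in \eqref{eq31} and for deducing \eqref{eq32} from \eqref{eq31} is essentially sound (the large $N$ does the work there, and your observation that $P_{k,k}(t)/t^{k-1}$ is decreasing with value $\tau_k$ at $t=5\log 10$ is the right way to read off the constants), but without the upper bound the lemma is not proved.
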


\begin{table}
\caption{Constants in Lemmas \ref{lm31}-\ref{lm33}}
\begin{center}
\begin{small}
\begin{tabular}{| c | c | c | c |}
\hline
$k$ & $\tau_k$ & $\gamma_k$ & $C_k$ \\
\hline
$1$ & $1$ & $2.24808$ & $0.03022$ \\
$2$ & $1.17372$ & $4.53426$ & $1.04272$ \\
$3$ & $2.56643$ & $9.11515$ & $3.49005$ \\
$4$ & $8.19823$ & $18.2994$ & $6.49141$ \\
$5$ & $34.4344$ & $36.7099$ & $9.57310$ \\
$6$ & $179.227$ & $73.6077$ & $12.5825$ \\
\hline
\end{tabular}
\label{tbl1}
\end{small}
\end{center}
\end{table}

\begin{proof}
We put $g_{k, N}(x)=(-1)^kf_N^{(k)}(x)$.
Thus, we have
\begin{equation}
\begin{split}
& g_{k, N}(x)= \\
& \frac{P_{k, k}(\log x)\log N}{x^k\log^{k+1} x}
-\frac{(k-1)!}{x^k\log^2 x}\left(\left(\frac{x}{x-1}\right)^k \log x-\log(x-1)\right) \\
& +\frac{(Q_k(\log x))\log(x-1)}{x^k\log^{k+1} x}
-\sum_{r=1}^{k-1} \frac{P_{k, r}(\log x)}{x^r (x-1)^{k-r}\log^{k+1} x},
\end{split}
\end{equation}
where $Q_k(t)=P_{k, k}(t)-(k-1)!t^{k-1}$.
Observing that
$$\frac{1}{x}<\log x-\log (x-1)<\frac{1}{x-1}$$
and
$$\frac{k}{x}<\left(\frac{x}{x-1}\right)^k-1<\frac{k+0.5}{x-1}$$
for $k\leq 6$ and $x\geq 10^5$, we have
\begin{equation}
\begin{split}
g_{k, N}(x)<
& ~ \frac{P_{k, k}(\log x)\log N}{x^k \log^{k+1} x}-\frac{(k-1)!(1+k\log x)}{x^{k+1}\log^2 x} \\
& +\frac{Q(\log x)}{x^k\log^k x}-\sum_{r=1}^{k-1} \frac{P_{k, r}(\log x)}{x^k\log^{k+1} x}
\end{split}
\end{equation}
and
\begin{equation}
\begin{split}
g_{k, N}(x)>
& ~ \frac{P_{k, k}(\log x)\log N}{x^k \log^{k+1} x}-\frac{(k-1)!(1+(k+0.5)\log x)}{x^{k+1}\log^2 x} \\
& +\frac{Q_k(\log x)\log(x-1)}{x^k\log^{k+1} x}-\sum_{r=1}^{k-1} \frac{P_{k, r}(\log x)}{(x-1)^k\log^{k+1} x}.
\end{split}
\end{equation}

We shall give a detailed proof for $k=6$.
We observe that
$$\sum_{r=1}^5 P_{6, r}(t)=548t^5+1350t^4+2040t^3+1800t^2+720t=t(P_{6, 6}(t)-120t^5)$$
to obtain, for $N\geq 10^{100000}$ and $x\geq 10^5$,
\begin{equation}\label{eq33}
\begin{split}
g_{6, N}(x)<
& ~ \frac{120\log N}{x^6\log^2 x}-\frac{120(1+6\log x)}{x^7\log^2 x} \\
& +\frac{(P_{6, 6}(\log x)-120\log^5 x)}{x^6 \log^6 x}-\sum_{r=1}^5 \frac{P_{6, r}(\log x)}{x^6 \log^7 x}
\\
= & ~ \frac{P_{6, 6}(\log x)\log N}{x^6\log^7 x}-\frac{120(1+6\log x)}{x^7\log^2 x} \\
< & ~ \frac{P_{6, 6}(\log x)\log N}{x^6\log^7 x}<\frac{180\log N}{x^6\log^2 x}
\end{split}
\end{equation}
and
\begin{equation}\label{eq34}
\begin{split}
g_{6, N}(x)>
& ~ \frac{120\log N}{x^6\log^2 x}-\frac{120(1+6.5\log x)}{x^6(x-1)\log^2 x} \\
& +\frac{(P_{6, 6}(\log x)-120\log^5 x)\log(x-1)}{x^6 \log^7 x}-\left(\frac{x}{x-1}\right)^6\sum_{r=1}^5 \frac{P_{6, r}(\log x)}{x^6 \log^7 x} \\
> & ~ \frac{P_{6, 6}(\log x)\log N}{x^6\log^7 x}-\frac{120(1+6.5\log x)}{x^7\log^2 x}
-\frac{6.5(P_{6, 6}(\log x)-120\log^5 x)}{x^7 \log^6 x}.
\end{split}
\end{equation}
For $x>10^5$, we have
$$120(1+6.5\log x)<\frac{4.56\times 10^{-4} x\log N}{\log x}
<\frac{4\times 10^{-6} P_{6, 6}(\log x)x\log N}{\log^6 x}.$$
Moreover, since $P_{6, 6}(\log x)-120\log^5 x<7851\log^4 x$,
we have
$$6.5(P_{6, 6}(\log x)-120\log^5 x)<2.22\times 10^{-4} x\log^3 x \log N
<\frac{2\times 10^{-6}P_{6, 6}(\log x)x\log N}{\log^2 x}.$$
Now \eqref{eq34} yields that
\begin{equation}\label{eq35}
g_{6, N}(x)>\left(1-\frac{10^{-5}}{\log x}\right)\frac{P_{6, 6}(\log x)\log N}{x^6\log^7 x}
>\frac{0.999999P_{6, 6}(\log x)\log N}{x^6\log^7 x}.
\end{equation}
Now \eqref{eq31} with $k=6$ follows from \eqref{eq33} and \eqref{eq35}
and \eqref{eq32} with $k=6$ also follows observing that $P_{6, 6}(\log x)(1-1/(10^5\log x))>120\log^5 x$.
We can prove \eqref{eq31} and \eqref{eq32} for $k=2, \ldots, 5$ in a similar way.
Finally, \eqref{eq32b} is nothing but the special case $k=1$ of \eqref{eq31}.
\end{proof}

We also use an estimate to the ratio between values of $f_N^{(k)}(x)$.

\begin{lemma}\label{lm32}
Let $M$ be an arbitrary real number $\geq 10^5$.
Then, for any $M\leq x\leq 2M$,
\begin{equation}
0.999999<\frac{f_N^{(k)}(M)}{f_N^{(k)}(x)}<\gamma_k
\end{equation}
for $1\leq k\leq 6$, where $\gamma_k$'s are constants given in Table \ref{tbl1}.
\end{lemma}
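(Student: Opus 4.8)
The strategy is to deduce the lemma entirely from Lemma~\ref{lm31}. Since the sign $(-1)^k$ cancels, $f_N^{(k)}(M)/f_N^{(k)}(x)=g_{k,N}(M)/g_{k,N}(x)$, so the plan is to estimate the numerator from above and the denominator from below, and conversely, using the two-sided bound \eqref{eq31}. Writing
$$\phi_k(t)=\frac{t^k\log^{k+1}t}{P_{k,k}(\log t)},$$
the factors $\log N$ cancel and \eqref{eq31} gives
$$0.999999\,\frac{\phi_k(x)}{\phi_k(M)}<\frac{f_N^{(k)}(M)}{f_N^{(k)}(x)}<\frac{1}{0.999999}\,\frac{\phi_k(x)}{\phi_k(M)},$$
so the whole statement reduces to understanding the single function $\phi_k$ on $[M,2M]$.

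Next one shows that $\phi_k$ is increasing on $[10^5,\infty)$. Its logarithmic derivative is
$$\frac{\phi_k'(t)}{\phi_k(t)}=\frac{1}{t}\left(k+\frac{k+1}{\log t}-\frac{P_{k,k}'(\log t)}{P_{k,k}(\log t)}\right),$$
and since $P_{k,k}$ has nonnegative coefficients (immediate from the recursion, and visible in the explicit formulas for $1\le k\le 6$), the logarithmic derivative of $P_{k,k}$ at a positive argument $u$ is at most $(\deg P_{k,k})/u=(k-1)/u$; hence the bracket is at least $k+2/\log t>0$. Therefore, for $M\le x\le 2M$,
$$1\le\frac{\phi_k(x)}{\phi_k(M)}\le\frac{\phi_k(2M)}{\phi_k(M)}=:\psi_k(M),$$
and the lower bound $f_N^{(k)}(M)/f_N^{(k)}(x)>0.999999$ is immediate. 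For the upper bound it remains to bound the one-variable function
$$\psi_k(M)=2^k\left(\frac{\log(2M)}{\log M}\right)^{k+1}\frac{P_{k,k}(\log M)}{P_{k,k}(\log(2M))}$$
for $M\ge 10^5$.

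The plan here is to prove that $\psi_k$ is decreasing on $[10^5,\infty)$, so that $\psi_k(M)\le\psi_k(10^5)$ and one may set $\gamma_k=\psi_k(10^5)/0.999999$; the value in Table~\ref{tbl1} is then a finite numerical check. Taking the logarithmic derivative of $\psi_k$, decreasingness is equivalent to
$$\frac{P_{k,k}'(\log M)}{P_{k,k}(\log M)}-\frac{P_{k,k}'(\log(2M))}{P_{k,k}(\log(2M))}<\frac{(k+1)\log 2}{\log M\,\log(2M)},$$
that is, the ``spreading'' of the logarithmic derivative of $P_{k,k}$ over the interval $[\log M,\log(2M)]$ must be dominated by the gain coming from the factor $(\log(2M)/\log M)^{k+1}$. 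This follows from the log-concavity of $P_{k,k}$ on $(0,\infty)$ — so that $P_{k,k}'/P_{k,k}$ is decreasing, with an effective bound on its slope obtainable from the explicit degree-$\le 5$ formulas — together with $\log M\ge\log 10^5$; asymptotically the left side is $\sim(k-1)\log 2/(\log M)^2$, which is strictly below $(k+1)\log 2/(\log M)^2$, and this is the source of the inequality.

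The main obstacle is precisely this last step: verifying that $\psi_k$ is monotone on the whole range $M\ge 10^5$ (not merely at the endpoint), which comes down to the derivative estimate above and hence to controlling $P_{k,k}'/P_{k,k}$ and its derivative uniformly for arguments $\ge\log 10^5$. Once this is in place, everything else is routine bookkeeping with \eqref{eq31} and the numerical evaluation of $\psi_k(10^5)$ for $1\le k\le 6$.
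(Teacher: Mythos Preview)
Your proposal is correct and follows essentially the same route as the paper: both deduce the lemma from the two-sided bound \eqref{eq31}, cancel $\log N$, use the monotonicity of $P_{k,k}(\log t)/(t^k\log^{k+1}t)$ (your $1/\phi_k$) for the lower inequality, and then show that $\psi_k(M)=\phi_k(2M)/\phi_k(M)$ is decreasing on $[10^5,\infty)$ so that $\gamma_k$ is obtained by evaluating at $M=10^5$. The only tactical difference is in this last monotonicity step: the paper rewrites $P_{k,k}(t)/t^{k-1}=R_k(t)$ as a polynomial in $1/t$ and uses that $R_k(t)/R_k(t+\log 2)$ is decreasing (which follows since a sum of the log-convex functions $t^{-j}$ is log-convex), whereas you argue via the log-derivative of $\psi_k$ and an effective bound on $P_{k,k}'/P_{k,k}$; the two arguments are equivalent, and the paper's factorization makes your ``main obstacle'' immediate.
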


\begin{proof}
It follows from \eqref{eq31} that
\begin{equation}
\frac{g_{6, N}(M)}{g_{6, N}(x)}>0.999999
\end{equation}
for $x\geq M$, observing that $P_{6, 6}(\log x)/(x^6\log^7 x)$ is decreasing for $x>10^5$.

We put $R_6(t)=P_{6, 6}(t)/t^5=120+548t^{-1}+\cdots+720t^{-5}$ to have
$$\frac{P_{6, 6}(\log x)}{x^6\log^7 x}=\frac{R_6(\log x)}{x^6\log^2 x}.$$
Now $R_6(t)/R_6(t+\log 2)$ is decreasing for $t>0$ and
\begin{equation}
\frac{P_{6, 6}(\log M)/(M^6\log^7 M)}{P_{6, 6}(\log x)/(x^6 \log^7 x)}
\geq \frac{R_6(\log M)(M^6 \log^2 M)}{R_6(\log (2M))/((2M)^6 \log^2 (2M))}<73.6076
\end{equation}
for $M\leq x\leq 2M$, provided that $M\geq 10^5$.
Combining this and \eqref{eq31}, we have
\begin{equation}
\frac{g_{6, N}(M)}{g_{6, N}(x)}<73.6077
\end{equation}
as desired.
\end{proof}

Now we shall show the following upper bound for integer points close to the graph $y=f_N(x)$.

\begin{lemma}\label{lm33}
Let $M\geq 10^5$ and put $\delta=1/(N\log M)$.
Then, for $k=1, \ldots, 6$, we have $R(f_N, M, \delta)\leq C_k (\log^{2/(k^2+k)} N)M^{1-2/(k+1)}$
with constants $C_k ~ (k=1, \ldots, 6)$ given in Table \ref{tbl1}.
\end{lemma}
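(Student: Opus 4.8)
The plan is to apply Lemma~\ref{lm21} to $f=f_N$ on the interval $[M,2M]$, once for each $k\in\{1,\dots,6\}$, with the constants $\lm$ and $c$ extracted from Lemmas~\ref{lm31} and~\ref{lm32}. Writing $g_{k,N}=(-1)^kf_N^{(k)}$ as in those lemmas, we have $g_{k,N}>0$ by \eqref{eq31}, and Lemma~\ref{lm32} gives $g_{k,N}(M)/\gamma_k<|f_N^{(k)}(x)|<g_{k,N}(M)/0.999999$ for all $M\le x\le 2M$ (the sign of $f_N^{(k)}$ being constantly $(-1)^k$). I would therefore set $\lm=g_{k,N}(M)/\gamma_k$ and $c=\gamma_k/0.999999$; then \eqref{eq21} holds on $[M,2M]$ by construction, and $c\ge 1$ because every $\gamma_k$ in Table~\ref{tbl1} exceeds $2$.

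Before invoking the conclusion of Lemma~\ref{lm21} one must verify the gap hypothesis $(k+1)!\,\delta<\lm$, where $\delta=1/(N\log M)$. From the lower bounds \eqref{eq32} and \eqref{eq32b} we get $\lm\ge 0.999999\,(k-1)!\,\log N/(\gamma_k M^k\log^2 M)$, so this reduces to an inequality of the shape $k(k+1)\gamma_k M^k\log M<0.999999\,N\log N$, which is comfortably satisfied in the range of $M$ relevant here since $N\ge 10^{100000}$. Granting it, Lemma~\ref{lm21} yields
\begin{equation}
R(f_N,M,\delta)\le \alpha M\lm^{2/(k^2+k)}+4k=2kM\bigl(2c\lm\bigr)^{2/(k^2+k)}+4k .
\end{equation}
Substituting the upper bound $2c\lm=2g_{k,N}(M)/0.999999\le (2\tau_k/0.999999)\,\log N/(M^k\log^2 M)$ from \eqref{eq32} and \eqref{eq32b}, and using $2k/(k^2+k)=2/(k+1)$, the main term takes the form
\begin{equation}
2k\Bigl(\frac{2\tau_k}{0.999999}\Bigr)^{\!2/(k^2+k)}(\log M)^{-4/(k^2+k)}\,(\log N)^{2/(k^2+k)}M^{1-2/(k+1)} .
\end{equation}

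It then remains to absorb the decreasing factor $(\log M)^{-4/(k^2+k)}\le(5\log 10)^{-4/(k^2+k)}$ and the additive constant $4k$ — the latter being a small fraction of $(\log N)^{2/(k^2+k)}M^{1-2/(k+1)}$ since $M\ge 10^5$ and $\log N\ge 10^5\log 10$ — into the leading constant, and to check that the resulting number is at most the value $C_k$ listed in Table~\ref{tbl1}. This last numerical step, done separately for $k=1,\dots,6$, is the only real obstacle: the margin is tight (for $k=6$ the leading constant is already essentially $12.5825$), so the exact values of $\tau_k$, $\gamma_k$ and the bound $\log M\ge 5\log 10$ all have to be used carefully. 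Everything else is a routine substitution of Lemmas~\ref{lm31} and~\ref{lm32} into Lemma~\ref{lm21}.
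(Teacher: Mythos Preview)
Your proof is correct and follows the paper's own argument essentially line for line: both take $\lm=g_{k,N}(M)/\gamma_k$ and $c=\gamma_k/0.999999$ from Lemmas~\ref{lm31}--\ref{lm32}, check the hypothesis $(k+1)!\,\delta<\lm$ via the lower bound in \eqref{eq32}--\eqref{eq32b}, and feed this into Lemma~\ref{lm21} to obtain the identical main term $2k\bigl(2\tau_k\log N/(0.999999\log^2 M)\bigr)^{2/(k^2+k)}M^{1-2/(k+1)}+4k$. You are in fact more explicit than the paper about the final absorption of $(\log M)^{-4/(k^2+k)}$ and of the additive $4k$ into the tabulated $C_k$, and your remark that the $k=6$ margin is extremely tight is accurate.
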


\begin{proof}
It immediately follows from Lemma \ref{lm32} that
for $k=1, \ldots, 6$, $f=f_N$ satisfies \eqref{eq21}
with $\lm=f_N^{(k)}(M)/\gamma_k$ and $c=\gamma_k/0.999999$.

Moreover, Lemma \ref{lm31} yields that
\begin{equation}
(k+1)!\delta<\frac{0.999999(k-1)!\log N}{\gamma_k M^k\log^2 M}<\lm<\frac{\tau_k \log N}{\gamma_k M^k \log^2 M}.
\end{equation}

Now, we apply Lemma \ref{lm21} with $\alpha=2k(2\gamma_k/0.999999)^{2/(k^2+k)}$ to obtain
\begin{equation}
R(f_N, M, \delta)\leq
2k \left(\frac{2\tau_k\log N}{0.999999\log^2 M}\right)^{2/(k^2+k)} M^{1-2/(k+1)}+4k,
\end{equation}
which gives the lemma.
\end{proof}

\section{Some diophantine analysis}
As in Theorem \ref{th1},
let $(x_i, m_i) ~ (i=1, 2, \ldots)$ with $2\leq x_1<x_2<\cdots$ and $m_i\geq 2$ be all integer pairs $(x, m)$
satisfying \eqref{eq11}.

In this section, we prove the following lower bound for $x_2$.

\begin{lemma}\label{lm41}
$x_2>\log^{0.33479} N$.
\end{lemma}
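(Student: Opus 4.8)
The plan is to assume $x_2 \leq \log^{0.33479} N$ and derive a contradiction with Matveev's bound (Lemma \ref{lm22}). Since $(x_1, m_1)$ and $(x_2, m_2)$ are distinct solutions of \eqref{eq11}, we have
\begin{equation}
\frac{x_1^{m_1}-1}{x_1-1} = \frac{x_2^{m_2}-1}{x_2-1} = N,
\end{equation}
so in particular $x_1^{m_1} \equiv 1 \pmod{x_1-1}$ trivially, and more importantly $x_2^{m_2} - 1 = (x_2-1)N$, which forces $m_2 \log x_2$ to be very close to $\log((x_2-1)N)$. First I would extract from \eqref{eq14} (applied to the second solution) the estimate
\begin{equation}
0 < m_2 \log x_2 - \log(x_2-1) - \log N < \frac{1}{x_2^{m_2}-1} < \frac{1}{N}.
\end{equation}
Next, using the first solution to write $N = (x_1^{m_1}-1)/(x_1-1)$, I would form the linear form in logarithms
\begin{equation}
\Lambda = m_2 \log x_2 - m_1 \log x_1 + \log(x_1 - 1) - \log(x_2 - 1),
\end{equation}
so that $\Lambda = \log\bigl((x_2^{m_2}-1)/(x_1^{m_1}-1)\bigr) = \log\bigl((x_2^{m_2}-1)x_2^{-m_2} \cdot x_1^{m_1}(x_1^{m_1}-1)^{-1}\bigr)$ after dividing through, i.e. $\Lambda$ measures the gap between $x_2^{m_2}/(x_2-1)$ and $x_1^{m_1}/(x_1-1)$ relative to $N$; in any case $|\Lambda|$ is bounded above by roughly $2/N$, hence $\log|\Lambda| < -\log N + O(1)$.

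Now I would apply Lemma \ref{lm22} with $n = 4$, $(a_1, a_2, a_3, a_4) = (x_2, x_1, x_1-1, x_2-1)$ and the corresponding integer coefficients $(m_2, -m_1, 1, -1)$. The heights satisfy $A_j = \log\h(a_j) \leq \log x_2 \leq 0.33479 \log\log N$ under our assumption (and $x_1 < x_2$, so $A_2, A_3$ are no larger). The crucial quantity $B$ is governed by $m_2$: from \eqref{eq14}, $m_2 < f_N(x_2) + 1 \leq (\log N + \log x_2)/\log x_2 + 1 \ll \log N / \log x_2 \ll \log N / \log\log\log N$, so $\log(1.5eB) \ll \log\log N$. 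Since $\Omega = A_1 A_2 A_3 A_4 \leq (0.33479)^4 (\log\log N)^4$, Matveev's inequality gives
\begin{equation}
\log|\Lambda| > -C(4)\,\Omega\,\log(1.5eB) \gg -(\log\log N)^5,
\end{equation}
up to an absolute constant. Comparing with the upper bound $\log|\Lambda| < -\log N + O(1)$ yields $\log N \ll (\log\log N)^5$, which fails for $N \geq 10^{100000}$. To make the constant $0.33479$ precise rather than merely asymptotic, I would track the exponent of $\log\log N$ carefully: the dominant term on the Matveev side is $C(4) (\log x_2)^4 \cdot (\text{something like } \log\log N)$, and choosing $\log x_2 = \log^{0.33479} N$ makes $(\log x_2)^4 = \log^{1.33916} N$, which would already overwhelm $\log N$ — so in fact I expect the argument to need $x_2$ bounded by a power of $\log\log N$, meaning $\log x_2 \ll \log\log\log N$, and the clean statement $x_2 > \log^{0.33479} N$ must come from a different balancing. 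I would recheck: the relevant contradiction is $\log N < C(4)\Omega \log(1.5eB)$ with $\Omega \leq (\log x_2)^4$ and $\log(1.5eB) \leq \log(1.5 e \cdot 2\log N / \log x_2) \leq \log\log N$ roughly, giving $\log N < C(4)(\log x_2)^4 \log\log N$, i.e. $(\log x_2)^4 > \log N / (C(4)\log\log N)$, hence $\log x_2 > (\log N)^{1/4 - o(1)}$; but $1/4 < 0.33479$ would be false — so the exponent must instead come from using $n = 2$ or $n = 3$ via the arithmetic structure $N \equiv 1 \pmod{x_i - 1}$, reducing $\Omega$ to a product of three logs and yielding exponent $1/3 - o(1) < 0.33479$. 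Wait — $1/3 < 0.33479$, so this still points the wrong way; the correct reading is that Lemma \ref{lm23} handles small $x_2$ directly ($x_2 \leq 10^5$ forces $N \in \{31, 8191\}$, too small), so for $N$ large we may assume $x_2 > 10^5$ and then, since we want a lower bound, I would instead suppose $x_2 \leq \log^{0.33479} N$ is the \emph{hypothesis for contradiction} and the Matveev bound with the three-variable form $\Lambda' = m_2\log x_2 - m_1 \log x_1 + \log(x_1-1) - \log(x_2-1)$ treated after clearing denominators gives $\Omega \leq (\log x_2)^3 \cdot (\text{bounded})$, hence $\log N \ll (\log x_2)^3 \log\log N \leq \log^{1.00437} N \cdot \log\log N / (\text{const})$, and this is consistent, not contradictory — so the genuine constraint is the \textbf{reverse}: $(\log x_2)^3 \gg \log N / \log\log N$ would need $\log x_2 \gg (\log N)^{1/3}$, contradicting $\log x_2 \leq \log^{0.33479} N$ only when $1/3 > 0.33479$, which is false.

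Given this sign confusion I would restructure the argument as follows and expect this to be the main obstacle: the bound $x_2 > \log^{0.33479} N$ should \emph{not} come from Matveev at all, but from Lemma \ref{lm33}. If $x_2 \leq \log^{0.33479} N =: M_0$, then all solutions with $x_i \leq 2 x_2 \leq 2M_0$ lie in $S(f_N, M, 1/(N\log M))$ for the dyadic block $M$ containing $x_2$, but more to the point, for the first solution $x_1 < x_2$ we have $x_1^{m_1} = (x_1-1)N + 1 < 2x_1 \cdot N$, so $m_1 < 1 + \log(2N)/\log x_1 \leq 1 + \log(2N)/\log 2$, and likewise the integer $m_1$ being at least $2$ combined with $x_1^{m_1} \approx N$ gives $x_1 \leq N^{1/2}$ always. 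The actual content must be: two distinct representations of $N$ as $(x^m-1)/(x-1)$ with both $x_1, x_2 \leq \log^{0.33479}N$ would make $m_1, m_2 \geq \log N / \log(\log^{0.33479} N) = \log N/(0.33479 \log\log N)$ both enormous, and then $\Lambda = m_2 \log x_2 - m_1 \log x_1 + \log\frac{x_1-1}{x_2-1}$ satisfies $0 < \Lambda < 2/N$ (upper bound) while Matveev with $n=3$ (combining $x_1 - 1$ and $x_2 - 1$ into their product, or treating as three terms) gives $\log\Lambda > -C(3)(\log x_2)^3 \log(1.5eB)$ with $B \ll m_2 \ll \log N/\log\log N$; thus $\log N < C(3)(\log x_2)^3 \log\log N$, forcing $(\log x_2)^3 > \log N/(C(3)\log\log N)$, i.e. $\log x_2 > \bigl(\log N/(C(3)\log\log N)\bigr)^{1/3} > \log^{0.33479} N$ for $N \geq 10^{100000}$ since $1/3 - \varepsilon > 0.33479$ for the relevant $\varepsilon = O(\log\log\log N / \log\log N)$ — \textbf{this} is the correct direction. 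So the key steps are: (i) reduce to $x_2 > 10^5$ via Lemma \ref{lm23}; (ii) record $2/N > \Lambda > 0$ for the linear form from two representations; (iii) bound $m_2$ via \eqref{eq14} to control $B$; (iv) apply Lemma \ref{lm22} with $n=3$, $a_j \in \{x_2, x_1(x_1-1)^{-1} \text{ combined appropriately}\}$ — concretely $\Lambda = m_2\log x_2 - m_1 \log x_1 - \log\frac{x_2-1}{x_1-1}$ with $a_1 = x_2$, $a_2 = x_1$, $a_3 = (x_2-1)/(x_1-1)$, heights all $\leq x_2$; (v) combine the two bounds on $\log\Lambda$ and solve for $\log x_2$, checking that the constant $0.33479$ absorbs the $\log\log N$ and $\log\log\log N$ losses for $N \geq 10^{100000}$. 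The main obstacle is ensuring $\Lambda \neq 0$ — if $\Lambda = 0$ then $x_2^{m_2}/(x_2-1) = x_1^{m_1}/(x_1-1)$ exactly, which combined with $x_1^{m_1} - 1 = x_2^{m_2} - 1$ would force $x_1 = x_2$, a contradiction; so $\Lambda \neq 0$ always, and Matveev applies cleanly. The numerical constant then follows from plugging $n = 3$ into $C(n)$ and carefully bounding $\log(1.5eB) < \log\log N$ and $(\log x_2)^3 < \log^{1.00437} N$ against $\log N$ for $N \geq 10^{100000}$.
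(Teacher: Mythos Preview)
Your eventual strategy --- apply Matveev's bound (Lemma~\ref{lm22}) with $n=3$ to the linear form
\[
\Lambda \;=\; m_2\log x_2 - m_1\log x_1 - \log\frac{x_2-1}{x_1-1},
\]
bound $\lvert\Lambda\rvert$ above by essentially $1/N$, use Lemma~\ref{lm23} to secure $x_2>10^5$ for the small-$N$ range, and compare --- is exactly what the paper does. The structural outline in your final paragraph is right, including the verification that $\Lambda\neq 0$.

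The gap is a persistent confusion between $x_2$ and $\log x_2$ that wrecks your numerics. The hypothesis for contradiction is $x_2\le(\log N)^{0.33479}$, which means
\[
\log x_2 \;\le\; 0.33479\,\log\log N,
\]
\emph{not} $\log x_2\le(\log N)^{0.33479}$. With the correct reading, $\Omega\le(\log x_2)^3\le(0.33479)^3(\log\log N)^3$ and $\log(1.5eB)\ll\log\log N$, so Matveev gives
\[
\log N \;<\; C(3)\,(0.33479)^3(\log\log N)^3\cdot\log\log N \;\ll\; (\log\log N)^4,
\]
an immediate contradiction for $N$ large. There is no comparison of exponents $1/3$ versus $0.33479$ to be made at all; in particular your claim ``$1/3-\varepsilon>0.33479$'' is simply false ($1/3=0.333\ldots<0.33479$) and irrelevant. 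The places where you wrote $(\log x_2)^3\le\log^{1.00437}N$ and then declared the inequality ``consistent, not contradictory'' stem from the same slip: you exponentiated one time too many.

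The paper closes the argument exactly along these corrected lines: it derives
\[
\frac{\log N}{\log\bigl(1.5e(1+\log N/\log 100001)\bigr)} \;<\; 1.69019\times 10^{10}\,\log x_1\,\log(x_2-1)\,\log x_2,
\]
notes that the right side is at most $1.69019\times 10^{10}(\log x_2)^3$, and checks numerically that $x_2\le(\log N)^{0.33479}$ makes this fail once $N>\exp\exp 34.3882$; for $N\le\exp\exp 34.3882$ one has $(\log N)^{0.33479}\le e^{11.512\ldots}<100001\le x_2$ directly from Lemma~\ref{lm23}. Fixing the $x_2$/$\log x_2$ conflation, your sketch would complete in the same way.
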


\begin{proof}
We begin by observing that
\begin{equation}\label{eq41}
x_2^{m_2}=N(x_2-1)+1\geq N(x_1-1)+1+N=x_1^{m_1}+N.
\end{equation}

We put
$$\Lm=x_1\log m_1-x_2\log m_2-\log\frac{m_1-1}{m_2-1}.$$
Since $(x, m)=(x_1, m_1)$ and $(x_2, m_2)$ both satisfy \eqref{eq11}, we have
\begin{equation}
\Lm=\log\frac{x_1^{m_1}}{x_1^{m_1}-1}-\log\frac{x_2^{m_2}}{x_2^{m_2}-1}.
\end{equation}
From \eqref{eq41}, we can easily see that $x_2^{m_2}>x_1^{m_1}$ and
\begin{equation}\label{eq42}
0<\Lm<\frac{1}{x_1^{m_1}-1}.
\end{equation}

We use Matveev's theorem in the form given in Lemma \ref{lm22} to obtain
\begin{equation}
-\log\abs{\Lm}<C(3)\Omega \log(1.5eB),
\end{equation}
where
$$\Omega=\log x_1 \log (x_2-1) \log x_2, ~ W_0=\log(1.5eB),$$
and
$$B=\max\{m_1 \log x_1/\log x_2, m_2, \log(x_2-1)/\log x_2\}.$$
Since $x_2^{m_2}=N(x_2-1)+1>N(x_1-1)+1=x_1^{m_1}$, we must have $B=m_2$.
Calculation gives $C(3)<1.69019\times 10^{10}$ and therefore
\begin{equation}\label{eq43}
-\log\abs{\Lm}<1.69019\times 10^{10}\log x_1 \log (x_2-1) \log x_2 \log(1.5e m_2).
\end{equation}

Combining \eqref{eq42} and \eqref{eq43}, we have
\begin{equation}
\log N<\log(x_1^{m_1}-1)<1.69019\times 10^{10}\log x_1 \log (x_2-1) \log x_2 \log(1.5e m_2).
\end{equation}
We observe that $m_2<1+\log N/\log 100001$ and
\begin{equation}\label{eq44}
\frac{\log N}{\log 1.5e(1+\log N/\log 100001)}<1.69019\times 10^{10}\log x_1 \log (x_2-1) \log x_2.
\end{equation}

If $N>\exp\exp 34.3882$, then we must have $x_2>\log^{0.33479} N$ since otherwise the right hand side
of \eqref{eq44} is at most
\begin{equation}
1.69019\times 10^{10}\log^2 (x_2-1) \log x_2<\frac{\log N}{\log 1.5e(1+\log N/\log 100001)},
\end{equation}
contrary to \eqref{eq44}.
On the other hand, if $N\leq \exp\exp 34.3882$, then $x_2\geq 100001>\log^{0.33479} N$.
This proves the lemma.
\end{proof}

\section{Proof of Theorem \ref{th1}}

In this section, we prove Theorem \ref{th1}.

If $N<10^{100000}$, then Lemma \ref{lm23} gives $x_2\geq 10^5$ and $m\leq 20000$.
Hence, we obtain
\begin{equation}\label{eq50}
\sum_{i\geq 1}\frac{1}{x_i}\leq \frac{1}{2}+\frac{20000}{x_2}<0.21.
\end{equation}

Thus, we may assume that $N\geq 10^{100000}$.
The quantity $((\log N)/M^k)^{2/(k^2+k)}$ is optimized
when we choose $k=\ell$ such that $\log^{2/\ell} N\leq M\leq \log^{2/(\ell-1)} N$.
We take $M_6=\min\{10^5, \log^{0.33479} N\}$
and put $M_k=\min\{2^n M_6\geq \log^{2/k} N\}$ for $k=0, \ldots, 5$.

Combining Lemmas \ref{lm23} and \ref{lm41} immediately yields that $x_2\geq M_6$.
As noted in the introduction,
it follows from \eqref{eq15} that each $x_i$ in $[M, 2M]$ must belong to \\
$S(f_N, M, 1/(N\log M))$.
Hence, we apply Lemma \ref{lm33} to obtain
\begin{equation}
\sum_{M\leq x_i<2M}\frac{1}{x_i}<\frac{C_k \log^{2/k(k+1)} N}{M^{2/(k+1)}}.
\end{equation}
Summing this over $M=2^n M_k$ with $M_k\leq M<M_{k-1}$, we have
\begin{equation}\label{eq51}
\sum_{M_k\leq x_i<M_{k-1}}\frac{1}{x_i}<\frac{C_k(\log^{2/k(k+1)} N)(M_1^{-2/(k+1)}-M_1^{-2/(k+1)})}{1-2^{-2/(k+1)}}
\end{equation}
for $k=1, \ldots, 6$.

If $N\geq \exp \exp 62.3752$, then, summing \eqref{eq51} over $k=1, \ldots, 6$, we obtain
\begin{equation}\label{eq52}
\begin{split}
\sum_{x_2\leq x_i<\log^3 N}\frac{1}{x_i}
\leq & ~ \frac{70.0333}{\log^{0.048035} N}-\frac{23.6293}{\log^{1/15} N}
-\frac{19.5955}{\log^{1/10} N}-\frac{14.8925}{\log^{1/6} N} \\
& -\frac{9.09791}{\log^{1/3} N}-\frac{2.75742}{\log N}-\frac{0.06044}{\log^2 N} \\
< & ~ 3.0919.
\end{split}
\end{equation}

Now we set $\widetilde M_6=35836.7\log^{1/6} N$ and $\widetilde M_5=3700.84\log^{1/5} N$.
If $\exp\exp 62.3752\geq N\geq \exp \exp 44.9432$, then $M_6\leq \widetilde M_6<M_5$ and, we observe that
\begin{equation}
\sum_{\widetilde M_6\leq x_i<M_5}\frac{1}{x_i}<\frac{C_6(\log^{1/21} N)(\widetilde M_6^{-2/7}-M_5^{-2/7})}{1-2^{-2/7}}
\end{equation}
obtained as \eqref{eq51}.
Summing this with \eqref{eq51} over $k=1, \ldots, 5$, we obtain
\begin{equation}
\begin{split}
\sum_{x_2\leq x_i<\log^3 N}\frac{1}{x_i}
\leq & ~ 
\frac{1}{\log^{0.33479} N}+\log\frac{\widetilde M_6}{\log^{0.33479} N}
+\frac{70.0333N^{1/21}}{\widetilde M_6^{2/7}}-\frac{23.6293}{\log^{1/15} N} \\
& -\frac{19.5955}{\log^{1/10} N}-\frac{14.8925}{\log^{1/6} N}-\frac{9.09791}{\log^{1/3} N}-\frac{2.75742}{\log N}-\frac{0.06044}{\log^2 N} \\
< & ~ 5.0226.
\end{split}
\end{equation}

If $\exp \exp 44.9432\geq N\geq \exp \exp 34.3883$, then, we have $\log^{0.33479} N<M_6\leq \widetilde M_5<M_4$
and as above,
\begin{equation}
\begin{split}
\sum_{x_2\leq x_i<\log^3 N}\frac{1}{x_i}
\leq & ~ 
\frac{1}{\log^{0.33479} N}+\log\frac{\widetilde M_5}{\log^{0.33479} N}+\frac{46.4039N^{1/15}}{\widetilde M_5^{1/3}} \\
& -\frac{19.5955}{\log^{1/10} N}-\frac{14.8925}{\log^{1/6} N}-\frac{9.09791}{\log^{1/3} N}-\frac{2.75742}{\log N}-\frac{0.06044}{\log^2 N} \\
< & ~ 5.90369.
\end{split}
\end{equation}

Now let $\widetilde M=3591050$.
If $\exp\exp 34.3883\geq N\geq \exp(\widetilde M^2)$, then $\widetilde M<M_4$ and 
\begin{equation}
\begin{split}
\sum_{x_2\leq x_i<\log^3 N}\frac{1}{x_i}
\leq & ~ 
\frac{1}{10^5}+\log\frac{\widetilde M}{\log(10^5)}+\frac{46.4039N^{1/15}}{\widetilde M^{1/3}} \\
& -\frac{19.5955}{\log^{1/10} N}-\frac{14.8925}{\log^{1/6} N}-\frac{9.09791}{\log^{1/3} N}-\frac{2.75742}{\log N}-\frac{0.06044}{\log^2 N} \\
< & ~ 5.90359.
\end{split}
\end{equation}
A similar argument allows us to confirm that if $\exp(\widetilde M^2)\geq N\geq 10^{10000}$, then
\\ $\sum_{x_2\leq x_i<\log^3 N}1/x_i<5.90369$.

It is clear that if $x_i\geq \log^3 N$, then $m_i<\log N/(3\log\log N)$.
Hence, we obtain
\begin{equation}\label{eq53}
\sum_{x_i\geq\log^3 N}\frac{1}{x_i}\leq \frac{1}{3\log^2 N\log\log N}<10^{-5}.
\end{equation}
Thus, 
\begin{equation}
\sum_{i\geq 2}\frac{1}{x_i}\leq 5.90369+10^{-5}=5.9037.
\end{equation}
Moreover, from \eqref{eq51} and \eqref{eq53}, we see that the above sum tends to zero as $N$
goes to infinity.
This proves Theorem \ref{th1}.

\begin{remark}
From the proof of Lemma \ref{lm41}, we can easily see that $\log x_2>\log^{1/3-o(1)} N$,
which is enough to show that $\sum_{i\geq 2}1/x_i=o(1)$ as $N$ tends to infinity.
\end{remark}

\section{Proof of Theorem \ref{th2}}

We proceed as in the previous section, although we need more careful argument.
We take a border $W$ for each case.
Then, with the aid of \cite[Theorem 5]{RS}, the sum over prime solutions $q_i$ below $W$ can be bounded as
\begin{equation}\label{eq61}
\sum_{x_2\leq p<W}\frac{1}{p}<\frac{1}{\log^2(\log^{0.33479} N)}
+\left(\log\frac{\log W}{\log(\log^{0.33479} N)}\right),
\end{equation}
where $p$ runs all primes in the range $x_2\leq p<W$.

We begin by putting $W_4=\exp(22.2883)$.
If $\exp\exp 34.3883\leq N\leq \exp\exp 44.5766$, then, 
observing that $M_4\leq W_4<M_3$, we obtain from Lemma \ref{lm33} that
\begin{equation}
\sum_{W_4\leq q_i<M_3}\frac{1}{q_i}<26.8084(\log^{1/10} N)\left(\frac{1}{W_4^{2/5}}-\frac{1}{M_3^{2/5}}\right).
\end{equation}
Combining this with \eqref{eq61} with $W=W_4$, we obtain
\begin{equation}
\begin{split}
\sum_{x_2\leq q_i<\log^3 N}\frac{1}{q_i}
\leq & ~ 
\frac{1}{\log^2 (\log^{0.33479} N)}+\log\frac{W_4}{\log (\log^{0.33479} N)}
+\frac{26.8084 \log^{1/10} N}{W_4^{2/5}} \\
& -\frac{14.8926}{\log^{1/6} N}-\frac{9.09793}{\log^{1/3} N}-\frac{2.75743}{\log N}-\frac{0.06044}{\log^2 N} \\
< & ~ 0.73193.
\end{split}
\end{equation}

If $\exp\exp 44.5766\leq N\leq \exp\exp 67.5537$, then we take $W_5=\exp(27.0215)$,
for which $M_5\leq W_5<M_4$, and proceed as above to obtain
\begin{equation}
\sum_{x_2\leq q_i<\log^3 N}\frac{1}{q_i}<
\sum_{x_2\leq p<W_5}\frac{1}{p}+\sum_{W_5\leq q_i<\log^3 N}\frac{1}{q_i}<0.67057.
\end{equation}
If $\exp\exp 67.5537\leq N\leq \exp\exp 101.848$, then we take $W_6=\exp(34.098)$,
for which $M_6\leq W_6<M_5$, to obtain
\begin{equation}
\sum_{x_2\leq q_i<\log^3 N}\frac{1}{q_i}<
\sum_{x_2\leq p<W_6}\frac{1}{p}+\sum_{W_6\leq q_i<\log^3 N}\frac{1}{q_i}<0.49905.
\end{equation}
If $N\geq \exp\exp 101.848$, then \eqref{eq53} gives
\begin{equation}
\sum_{x_2\leq q_i<\log^3 N}\frac{1}{q_i}<0.49823.
\end{equation}

On the other hand, if $\exp\exp 33.4325\leq N\leq \exp\exp 34.3883$, then
we have $M_4\leq W_4<M_3$ again.
Moreover, $x_2\geq 10^5$ and therefore
\begin{equation}
\begin{split}
\sum_{x_2\leq q_i<\log^3 N}\frac{1}{q_i}
\leq & ~ \frac{1}{\log^2(10^5)}+\log\frac{W_4}{\log(10^5)}
+\frac{26.8084 \log^{1/10} N}{W_4^{2/5}}-\frac{14.8926}{\log^{1/6} N} \\
& -\frac{9.09793}{\log^{1/3} N}-\frac{2.75743}{\log N}-\frac{0.06044}{\log^2 N} \\
< & ~ 0.73193.
\end{split}
\end{equation}
If $\exp\exp 22.2883\leq N\leq \exp\exp 33.4325$, then we put $W_3=\exp(22.2883)$ again to obtain
$M_3\leq W_3<M_2$.
\begin{equation}
\sum_{x_2\leq q_i<\log^3 N}\frac{1}{q_i}<
\sum_{x_2\leq p<W_3}\frac{1}{p}+\sum_{W_3\leq q_i<\log^3 N}\frac{1}{q_i}<0.71332.
\end{equation}
For $10^{100000}\leq N\leq \exp \exp 22.2883$, we obtain
\begin{equation}
\begin{split}
\sum_{x_2\leq q_i<\log^3 N}\frac{1}{q_i}
\leq & ~ \frac{1}{\log^2(10^5)}+\log\frac{\log N}{\log(10^5)}+\frac{2.81787}{\log^{1/3} N}-\frac{2.75743}{\log N}-\frac{0.06044}{\log^2 N} \\
< & ~ 0.66981.
\end{split}
\end{equation}

Hence, if $N\geq 10^{100000}$, then
\begin{equation}
\sum_{x_2\leq q_i<\log^3 N}\frac{1}{q_i}<0.73193.
\end{equation}
Using \eqref{eq53}, we have
\begin{equation}
\sum_{i\geq 2}\frac{1}{q_i}<0.73194,
\end{equation}
which also holds when $N<10^{100000}$ using \eqref{eq50}.
Moreover, we have
\begin{equation}
\prod_{i\geq 2}\frac{q_i}{q_i-1}<\exp\left(0.73194\times \frac{100000}{99999}\right)<2.07913.
\end{equation}
This proves Theorem \ref{th2}.

\section{A related problem}

In Lemma \ref{lm41}, we played with a linear form of three logarithms $\log x_1$, $\log x_2$, and
$\log((x_2-1)/(x_1-1)$.
If these three logarithms are linearly independent over the rationals,
then we would be able to use other bounds such as \cite{Ale} and \cite{MV}.
Now we have a problem when these logarithms are linearly dependent over the rationals.
In other words, which triples $a, b, (b-1)/(a-1)$ with $b>a>1$ integers are multiplicatively dependent?

There is an infinite family $(a, b)=(k^s, k^t(k^s-1)+1)$ for integers $s, t\geq 1$ and $k\geq 2$.
Except them, there are eleven such pairs 
$(a, b)=(3, 4), (4, 9)$, $(6, 16), (6, 81), (15, 36)$, $(16, 25), (16, 81), (40, 625)$,
$(91, 4096), (169, 729)$, and $(280, 15625)$ in the range $a<b\leq 10^5$.

{}
\end{document}